\providecommand{\U}[1]{\protect \rule{.1in}{.1in}}
\theoremstyle{plain}
\newtheorem{lemma}{Lemma}
\newtheorem{theorem}{Theorem}
\numberwithin{equation}{section}
\begin{document}
\title{Central Sequences in Subhomogeneous Unital C*-algebras}
\author{Don Hadwin}
\address{Mathematics Department, University of New Hampshire}
\email{don@unh.edu}
\author{Hemant Pendharkar}
\curraddr{Department of Mathematics and Statistics, University of South Florida}
\email{pendharkar@usf.edu}
\urladdr{math.usf.edu/faculty/hpendharkar}
\thanks{This paper is in final form and no version of it will be submitted for
publication elsewhere.}
\subjclass[2010]{Primary 46L40 Secondary 46L10}
\keywords{central sequence, hypercentral sequence, subhomogeneous, C*-algebra}

\begin{abstract}
Suppose $\mathcal{A}$ is a unital subhomogeneous C*-algebra. We show that
every central sequence in $\mathcal{A}$ is hypercentral if and only if every
pointwise limit of a sequence of irreducible representations is multiplicity
free. We also show that every central sequence in $\mathcal{A}$ is trivial if
and only if every pointwise limit of irreducible representations is
irreducible. We also give a nice repesentation of the latter algebras.

\end{abstract}
\maketitle

The notion of central sequences (with respect to the $\left \Vert {}\right \Vert
_{2}$-norm defined by the trace) on $II_{1}$ factor von Neumann algebras has
played an important role in the study of these algebras. For example, D.
McDuff \cite{M} proved that if there are two central sequences in a type
$II_{1}$ factor von Neumann algebra $\mathcal{M}$ that do not asymptotically
commute, then $\mathcal{M}$ is isomorphic to $\mathcal{M}\otimes \mathcal{R}$,
where $\mathcal{R}$ is the hyperfinite $II_{1}$ factor. A $II_{1}$ factor has
property $\Gamma$ if and only if there is a central sequence that is bounded
away from the center.

The analogous notion for C*-algebras is more complicated. Suppose
$\mathcal{A}$ is a separable unital C*-algebra. A bounded sequence $\left \{
x_{n}\right \}  $ in $\mathcal{A}$ is called a \emph{central sequence} if, for
every $a\in \mathcal{A}$,%
\[
\lim_{n\rightarrow \infty}\left \Vert ax_{n}-x_{n}a\right \Vert =0\text{.}%
\]
A central sequence $\left \{  x_{n}\right \}  $ for $\mathcal{A}$ is called
\emph{hypercentral} if and only if, for every central sequence $\left \{
y_{n}\right \}  $,%
\[
\lim_{n\rightarrow \infty}\left \Vert x_{n}y_{n}-y_{n}x_{n}\right \Vert =0.
\]
Let $\mathcal{Z}\left(  \mathcal{A}\right)  $ denote the center of the
C*-algebra $\mathcal{A}$. A sequence $\left \{  x_{n}\right \}  $ is called a
\emph{trivial central sequence} if%
\[
\lim_{n\rightarrow \infty}\text{\textrm{dist}}\left(  x_{n},\mathcal{Z}\left(
\mathcal{A}\right)  \right)  =0.
\]
The study of central sequences and hypercentral sequences for a C*-algebra was
initiated by C. Akemann G. Pedersen \cite{AP} and J. Phillips \cite{P}, who
both related central sequences to the automorphism groups of the algebras.

The question of whether there are central sequences that are bounded away from
the center was solved by C. Akemann G. Pedersen \cite{AP}, who proved that
every central sequence in a separable C*-algebra $\mathcal{A}$ is trivial
exactly when $\mathcal{A}$ is a continuous trace C*-algebra. Thus most
C*-algebras have a large supply of nontrivial central sequences. The question
of when every two central sequences are asymptotically commuting, i.e., when
every central sequence is hypercentral has been partially solved. H. Ando and
E. Kirchberg \cite{AK} proved that if $\mathcal{A}$ is a separable C*-algebra
that is not type I (GCR), then there is central sequence that is not
hypercentral. However, if $\mathcal{A}$ is a unital type I separable
C*-algebra with an infinite-dimensional irreducible representation, then J.
Phillips \cite[Theorem 3.6]{P} proved that there is central sequence that is
not hypercentral. Thus if a unital separable C*-algebra has the property that
every two central sequences are asymptotically commuting, then every
irreducible representation must be finite-dimensional. In this paper we focus
on the class of \emph{subhomogeneous} C*-algebras, i.e., ones for with there
is a positive integer so that every irreducible representation is at most
$n$-dimensional. In this paper, for subhomogeneous separable unital
C*-algebras, we give characterizations, in terms of finite-dimensional
irreducible representations, of the property that every central sequence is
trivial (Theorem \ref{ct}), and of the property that every central sequence is
hypercentral (Theorem \ref{hc}). Thus, for subhomogeneous C*-algebras our
results give a nice comparison of these two properties. Most of the results in
this paper appeared in the second author's PhD dissertation \cite{HP}.

\textbf{NOTE:} We have been informed by Tatiana Shulman, that she and Dominic
Enders have completely characterized all C*-algebras whose central sequences
are all hypercentral. 

The following is contained in \cite{AP} and \cite{P}.

\begin{lemma}
If $\mathcal{A}$ is a separable C*-algebra, then
\end{lemma}

\begin{enumerate}
\item If $\pi:\mathcal{A}\mathbf{\rightarrow}\mathcal{B}$ is a surjective
unital $\ast$-homomorphism and $\left \{  b_{n}\right \}  $ is a central
sequence in $\mathcal{B}$, then there is a central sequence $\left \{
a_{n}\right \}  $ in $\mathcal{A}$ such that $\pi \left(  a_{n}\right)  =b_{n}$
for every $n\in \mathbb{N}$.

\item Every central sequence of $\mathcal{A}$ is trivial if and only if
$\mathcal{A}$ is a continuous trace C*-algebra.

\item If $\pi:\mathcal{A}\mathbf{\rightarrow}\mathcal{B}$ is a surjective
unital $\ast$-homomorphism and every central sequence in $\mathcal{A}$ is
hypercentral, then every central sequence of $\mathcal{B}$ is hypercentral.
\end{enumerate}

Suppose $\mathcal{A}$ is a unital C*-algebra and $n$ is a positive integer.
Let \textrm{Rep}$_{n}\left(  \mathcal{A}\right)  $ be the set of unital $\ast
$-homomorphisms $\pi:\mathcal{A}\rightarrow \mathbb{M}_{n}\left(
\mathbb{C}\right)  $, with the topology of pointwise convergence. We define
\textrm{Irr}$_{n}\left(  \mathcal{A}\right)  $ to be the set of irreducible
representations in \textrm{Rep}$_{n}\left(  \mathcal{A}\right)  $. Every
representation $\pi \in$ \textrm{Rep}$_{n}\left(  \mathcal{A}\right)  $ can be
written as a direct sum%
\[
\pi=\sigma_{1}\oplus \cdots \oplus \sigma_{k}%
\]
with each $\sigma_{j}$ irreducible. We say that $\pi \in$ \textrm{Rep}%
$_{n}\left(  \mathcal{A}\right)  $ is \emph{multiplicity free} if, whenever
$1\leq i<j\leq k$, $\sigma_{i}$ and $\sigma_{j}$ are not unitarily equivalent.
Equivalent, $\pi$ is multiplicity free if and only if $\pi \left(
\mathcal{A}\right)  ^{\prime}$ is abelian.

If $\mathcal{A}$ is separable and $\left \{  x_{1},x_{2},\ldots \right \}  $ is
dense in the unit ball of $\mathcal{A}$, then%
\[
d\left(  \pi,\rho \right)  =\sum_{k=1}^{\infty}\frac{1}{2^{k}}\left \Vert
\pi \left(  x_{k}\right)  -\rho \left(  x_{k}\right)  \right \Vert
\]
is a metric on \textrm{Rep}$_{n}\left(  \mathcal{A}\right)  $ that makes a
compact metric space.

A C*-algebra $\mathcal{A}$ is \emph{subhomogeneous} if there is a smallest
positive integer $N$ such that every irreducible representation of
$\mathcal{A}$ is unitarily equivalent to a representation in
\[
\cup_{1\leq k\leq N}\text{\textrm{Irr}}_{k}\left(  \mathcal{A}\right)  \text{.
}%
\]
We call $N$ the \emph{degree} of $\mathcal{A}$. It was shown by L. Robert
\cite{LR} that a unital continuous trace C*-algebra is subhomogeneous.

\begin{lemma}
\label{useful}Suppose $\mathcal{A}$ is a unital separable C*-algebra,
$s\in \mathbb{N}$, $\left \{  \pi_{n}\right \}  $ is a sequence in \textrm{Irr}%
$_{s}\left(  \mathcal{A}\right)  $ such that, $\pi_{m}$ and $\pi_{n}$ are not
unitarily equivalent whenever $1\leq m<n<\infty$. Suppose $\pi_{0}\in$
\textrm{Rep}$_{s}\left(  \mathcal{A}\right)  ,$ and $\pi_{n}\rightarrow \pi
_{0}$ pointwise on $\mathcal{A}$, and, for every $n\in \mathbb{N}$, $\pi_{0}$
is not unitarily equivalent to $\pi_{n}$. Let $\pi=\pi_{0}\oplus \pi_{1}%
\oplus \pi_{2}\oplus \cdots$. Then

\begin{enumerate}
\item $T=T_{0}\oplus T_{1}\oplus T_{2}\oplus \cdots \in \pi \left(  \mathcal{A}%
\right)  $ if and only if $T_{0}\in \pi_{0}\left(  \mathcal{A}\right)  $ and
\[
\lim_{n\rightarrow \infty}\left \Vert T_{n}-T_{0}\right \Vert =0\text{.}%
\]

\item $A\in \pi_{0}\left(  \mathcal{A}\right)  ^{\prime}$ in $\mathbb{M}%
_{s}\left(  \mathcal{A}\right)  $ if and only if there is a central sequence
$\left \{  a_{n}\right \}  $ in $\mathcal{A}$ and $\pi_{n}\left(  a_{n}\right)
\rightarrow A\in \mathbb{M}_{s}\left(  \mathbb{C}\right)  $.
\end{enumerate}
\end{lemma}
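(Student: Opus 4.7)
The plan is to prove (1) by identifying $\pi(\mathcal{A})$ with the natural ambient C*-subalgebra
\[
\mathcal{B} := \bigl\{(X_0, X_1, X_2, \ldots) \in \textstyle\prod_{n\geq 0} \mathbb{M}_s(\mathbb{C}) : X_0 \in \pi_0(\mathcal{A}),\ \|X_n - X_0\| \to 0\bigr\}
\]
via Glimm's Stone--Weierstrass theorem for type I C*-algebras, and then to deduce (2) from (1) together with part (1) of the first (unnumbered) Lemma of the paper. The direction (1)$\Rightarrow$ is immediate: if $T = \pi(a)$, then pointwise convergence gives $T_n = \pi_n(a) \to \pi_0(a) = T_0$ in $\mathbb{M}_s(\mathbb{C})$, and $T_0 \in \pi_0(\mathcal{A})$.

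For (1)$\Leftarrow$, first I would verify that $\mathcal{B}$ is type I. The ``limit'' map $\mathcal{B}\to\pi_0(\mathcal{A})$, $(X_n)\mapsto X_0$, is a surjective $\ast$-homomorphism whose kernel is isomorphic to $c_0(\mathbb{N}_{\ge 1},\mathbb{M}_s(\mathbb{C}))$ (type I, being CCR with discrete spectrum); and $\pi_0(\mathcal{A})\cong\bigoplus_i\mathbb{M}_{s_i}(\mathbb{C})$ via the irreducible decomposition $\pi_0=\bigoplus_i m_i\tau_i$ is also type I. Since extensions of type I by type I are type I, $\mathcal{B}$ is type I, with its full list of irreducible representations being the evaluations $\mathrm{ev}_n$ ($n\geq 1$, onto $\mathbb{M}_s(\mathbb{C})$) together with the components $\mathrm{ev}_0|_{\tau_i}$ (onto $\mathbb{M}_{s_i}(\mathbb{C})$). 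Next I would check that the unital C*-subalgebra $\pi(\mathcal{A}) \subseteq \mathcal{B}$ separates the pure states of $\mathcal{B}$: pure states from inequivalent irreducibles of $\mathcal{B}$ compose with $\pi$ to pure states coming from pairwise inequivalent irreducibles of $\mathcal{A}$ (using that $\pi_j\not\sim \tau_i$ either by hypothesis when $\pi_0$ is irreducible, or by a dimension count when $\pi_0$ is reducible), and Kadison's transitivity theorem applied to $\mathcal{A}$ then yields $a\in\mathcal{A}$ with $\pi(a)$ separating them; two pure states from the same irreducible but different vectors are separated because $\pi(\mathcal{A})$ maps onto the full matrix algebra at each such evaluation. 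Glimm's Stone--Weierstrass theorem then gives $\pi(\mathcal{A}) = \mathcal{B}$.

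For (2)$\Leftarrow$, passing the identity $\pi_n(a)\pi_n(a_n)-\pi_n(a_n)\pi_n(a)=\pi_n(aa_n-a_na)\to 0$ through the pointwise limit yields $[\pi_0(a),A]=0$ for every $a$, hence $A\in\pi_0(\mathcal{A})'$. For (2)$\Rightarrow$, given $A\in\pi_0(\mathcal{A})'$, let $b_n\in\prod\mathbb{M}_s(\mathbb{C})$ have $A$ in the $n$-th coordinate and $0$ elsewhere; by (1), $b_n\in\pi(\mathcal{A})=\mathcal{B}$, and $\{b_n\}$ is central in $\pi(\mathcal{A})$ because $[\pi(a),b_n]$ is supported at coordinate $n$ with value $[\pi_n(a),A]\to[\pi_0(a),A]=0$. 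Applying part (1) of the first Lemma to the surjection $\pi:\mathcal{A}\to\pi(\mathcal{A})$ lifts $\{b_n\}$ to a central sequence $\{a_n\}\subset\mathcal{A}$ with $\pi(a_n)=b_n$, so $\pi_n(a_n)=A$ for every $n$. The main obstacle is the type I verification of $\mathcal{B}$ needed to invoke Glimm's Stone--Weierstrass theorem; once the explicit extension analysis above identifies $\mathcal{B}$ as type I with the expected spectrum, the separation checks are routine consequences of pairwise inequivalence and Kadison transitivity for $\mathcal{A}$.
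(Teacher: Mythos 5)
Your proof of part (2) is essentially the paper's: the forward implication passes the commutator identity through the pointwise limit (the paper does this via the constant sequences $\hat T$ supplied by part (1), but the content is the same), and the reverse implication lifts the central sequence $\{b_n\}$ of $\pi(\mathcal{A})$ through the surjection $\pi$ using part (1) of the unnumbered lemma, exactly as in the paper. Part (1) is where you genuinely diverge. The paper proves $\pi(\mathcal{A})=\mathcal{B}$ by hand: using that pairwise inequivalent finite-dimensional representations are disjoint, plus a functional-calculus trick exploiting $\pi_n\to\pi_0$, it manufactures elements $p_n\in\mathcal{A}$ with $\pi_n(p_n)=1$ and $\pi_k(p_n)=0$ for $k\neq n$, concludes that $\pi(\mathcal{A})$ contains every $0\oplus T_1\oplus T_2\oplus\cdots$ with $\Vert T_n\Vert\to 0$, and finishes by subtracting $\pi(a)$. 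You instead identify the ambient algebra $\mathcal{B}$, check it is type I, and invoke Glimm's Stone--Weierstrass theorem. That is a legitimate alternative in the spirit of the paper (which uses Glimm's theorem in Lemma \ref{iso}); it is shorter but leans on a much heavier black box where the paper's argument is elementary and self-contained.

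There is one point you must repair before the Glimm step closes. The form of Glimm's theorem in the cited reference requires the subalgebra to separate the weak-$*$ closure $\overline{P(\mathcal{B})}\cup\{0\}$, not merely $P(\mathcal{B})\cup\{0\}$, and the paper only ever applies it after verifying that the pure state space is closed. For your $\mathcal{B}$ the pure state space fails to be closed in exactly the case that matters for Theorem \ref{hc}: if $\pi_0$ contains an irreducible summand with multiplicity at least $2$ and $\xi\in\mathbb{C}^s$ is a unit vector that is not a simple tensor across the multiplicity space, then the pure states $X\mapsto\langle X_n\xi,\xi\rangle$ converge weak-$*$ to the non-pure state $X\mapsto\langle X_0\xi,\xi\rangle$. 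So you must either quote a Stone--Weierstrass theorem valid for type I algebras under pure-state separation alone, or extend your separation check to the closure. The latter is a short supplement: the extra elements of $\overline{P(\mathcal{B})}$ are precisely the vector states $X\mapsto\langle X_0\xi,\xi\rangle$; two such states agreeing on $\pi(\mathcal{A})$ agree on $\mathrm{ev}_0(\mathcal{B})=\pi_0(\mathcal{A})$ and hence on $\mathcal{B}$, and such a state cannot agree on $\pi(\mathcal{A})$ with a pure state attached to some $\pi_n$, since $\pi_0$ and $\pi_n$ are disjoint (choose $a$ with $\pi_0(a)=0$ and $\pi_n(a)=1$). With that addition your argument is correct.
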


\begin{proof}
$\left(  1\right)  $. Since $\pi \left(  a\right)  =\pi_{0}\left(  a\right)
\oplus \pi_{1}\left(  a\right)  \oplus \pi_{2}\left(  a\right)  \oplus \cdots$
and $\pi_{n}\left(  a\right)  \rightarrow \pi_{0}\left(  a\right)  ,$ we see
that the condition is necessary. Since, for $0\leq m<n<\infty$, $\pi_{n}$ and
$\pi_{n}$ are finite-dimensional representations with no unitarily equivalent
irreducible direct summands, there are elements $a_{mn}\in \mathcal{A}$ with
$0\leq a_{mn}\leq1$ such that $\pi_{m}\left(  a_{mn}\right)  =0$ and $\pi
_{n}\left(  a_{mn}\right)  =1.$ Thus $\lim_{n\rightarrow \infty}\left \Vert
\pi_{n}\left(  a_{01}\right)  \right \Vert =\left \Vert \pi_{0}\left(
a_{01}\right)  \right \Vert =0$. Hence there is an $n_{0}\in \mathbb{N}$ such
that $0\leq \pi_{n}\left(  a_{01}\right)  \leq1/2$ whenever $n\geq n_{0}$. If
$f:\left[  0,1\right]  \rightarrow \left[  0,1\right]  $ is continuous with
$f\left(  t\right)  =0$ when $t\in \left[  0,1/2\right]  $ and $f\left(
1\right)  =1,$ then%
\[
\pi_{1}\left(  f\left(  a_{01}\right)  \right)  =1\text{ and }\pi_{n}\left(
f\left(  a_{01}\right)  \right)  =0\text{ when }n\geq n_{0}\text{.}%
\]
Thus $p_{1}=f\left(  a_{01}\right)
%TCIMACRO{\dprod _{k=2}^{n_{0}-1}}%
%BeginExpansion
{\displaystyle \prod_{k=2}^{n_{0}-1}}
%EndExpansion
a_{k1}$ has the property that $\pi_{1}\left(  p_{1}\right)  =1$ and $\pi
_{n}\left(  p_{1}\right)  =0$ when $n\neq1.$ Similarly, for $n=2,3,\ldots$
there is a $p_{n}\in \mathcal{A}$ such that $\pi_{n}\left(  p_{n}\right)  =1$
and $\pi_{k}\left(  p_{n}\right)  =0$ when $k\neq n$. Since $\pi
_{n}:\mathcal{A}\rightarrow \mathbb{M}_{n}\left(  \mathbb{C}\right)  $ is
surjective for each $n\in \mathbb{N}$, we see that the set $\mathcal{S}$ of all
$0\oplus T_{1}\oplus T_{2}\oplus \cdots$ with only finitely many nonzero
summands is contained in $\pi \left(  \mathcal{A}\right)  $. But the closure
$\mathcal{S}^{-}$ of $\mathcal{S}$ is the set of all $0\oplus T_{1}\oplus
T_{2}\oplus \cdots$ with $\lim_{n\rightarrow \infty}\left \Vert T_{n}\right \Vert
=0$ is contained in $\pi \left(  \mathcal{A}\right)  $. If $T_{0}=\pi
_{0}\left(  a\right)  $ for some $a\in \mathcal{A}$ and $\lim_{n\rightarrow
\infty}T_{n}=T_{0}$, then
\[
T_{0}\oplus T_{1}\oplus \cdots-\pi \left(  a\right)  \in \mathcal{S}^{-}%
\subset \pi \left(  \mathcal{A}\right)  ,
\]
so
\[
T_{0}\oplus T_{1}\oplus \cdots \in \pi \left(  \mathcal{A}\right)  \text{.}%
\]

$\left(  2\right)  $. Suppose $\left \{  a_{n}\right \}  $ is a central sequence
and $\pi_{n}\left(  a_{n}\right)  \rightarrow A\in \mathbb{M}_{s}\left(
\mathbb{C}\right)  $. Suppose $T\in \pi_{0}\left(  \mathcal{A}\right)  $. Then,
by $\left(  1\right)  $, $\hat{T}=T\oplus T\oplus \cdots \in \pi \left(
\mathcal{A}\right)  $. Thus%
\[
\left \Vert AT-TA\right \Vert =\lim_{n\rightarrow \infty}\left \Vert \pi
_{n}\left(  a_{n}\right)  T-T\pi_{n}\left(  a_{n}\right)  \right \Vert
\]%
\[
\leq \lim_{n}\left \Vert \pi \left(  a_{n}\right)  \hat{T}-\hat{T}\pi \left(
a_{n}\right)  \right \Vert =0.
\]
Hence $A\in \pi_{0}\left(  \mathcal{A}\right)  ^{\prime}$. Now suppose $A\in
\pi_{0}\left(  \mathcal{A}\right)  ^{\prime}$, and, for each $n\in \mathbb{N}$,
define $A_{n}=A_{n}\left(  0\right)  \oplus A_{n}\left(  1\right)  \oplus
A_{n}\left(  2\right)  \oplus \cdots$, where $A_{n}\left(  n\right)  =A$ and
$A_{n}\left(  k\right)  =0$ when $k\neq n$. We know from $\left(  1\right)  $
that $A_{n}\in \pi \left(  T\right)  $ and, for every $T\in \pi_{0}\left(
\mathcal{A}\right)  $ that%
\[
A_{n}\hat{T}-\hat{T}A_{n}=0.
\]
Thus $\left \{  A_{n}\right \}  $ is a central sequence in $\pi \left(
\mathcal{A}\right)  $ and there is a central sequence $\left \{  a_{n}\right \}
$ in $\mathcal{A}$ such that, for each $n\in \mathbb{N}$, $\pi \left(
a_{n}\right)  =A_{n}$ and $\pi_{n}\left(  a_{n}\right)  =A$.
\end{proof}

\begin{theorem}
\label{hc}Suppose $\mathcal{A}$ is a separable unital C*-algebra. Then

\begin{enumerate}
\item If every central sequence of $\mathcal{A}$ is hypercentral, then, for
every $s\in \mathbb{N}$, every representation in \textrm{Irr}$_{s}\left(
\mathcal{A}\right)  ^{-}$ is multiplicity free.

\item If $\mathcal{A}$ is subhomogeneous, and, for every $s\in \mathbb{N}$,
every representation in \textrm{Irr}$_{s}\left(  \mathcal{A}\right)  ^{-}$ is
multiplicity free, then every central sequence in $\mathcal{A}$ is hypercentral.
\end{enumerate}
\end{theorem}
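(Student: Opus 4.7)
The plan is to prove the two implications separately, using Lemma~\ref{useful} for (1) and a compactness-plus-commutant argument for (2).

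For (1), I would argue by contrapositive. Suppose some $\pi_0 \in \mathrm{Irr}_s(\mathcal{A})^{-}$ fails to be multiplicity free, so $\pi_0(\mathcal{A})'$ is non-abelian and contains elements $A,B$ with $[A,B] \neq 0$. In particular $\pi_0$ is not irreducible, hence not unitarily equivalent to any irreducible $\pi_n \in \mathrm{Irr}_s(\mathcal{A})$ approximating it pointwise. The first task is to pass to a subsequence making the $\pi_n$ pairwise inequivalent and each inequivalent to $\pi_0$, as required by Lemma~\ref{useful}. If infinitely many $\pi_n$ lay in a single equivalence class $[\rho]$ with $\rho$ irreducible, then writing $\pi_n = U_n \rho U_n^{*}$ and using compactness of the unitary group $U(s)$ would yield a subsubsequence $U_n \to U$, forcing $\pi_0 = U \rho U^{*}$ to be irreducible, a contradiction. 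Once the subsequence is arranged, Lemma~\ref{useful}(2) produces central sequences $\{a_n\}, \{b_n\} \subset \mathcal{A}$ with $\pi_n(a_n) \to A$ and $\pi_n(b_n) \to B$, and since $\pi_n$ is norm-decreasing,
\[
\|a_n b_n - b_n a_n\| \;\ge\; \|\pi_n(a_n)\pi_n(b_n) - \pi_n(b_n)\pi_n(a_n)\| \;\longrightarrow\; \|AB - BA\| \;>\; 0,
\]
so the central sequence $\{a_n\}$ fails to asymptotically commute with the central sequence $\{b_n\}$, i.e.\ is not hypercentral.

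For (2), assume $\mathcal{A}$ has degree $N$ and that every pointwise limit of irreducible $s$-dimensional representations ($1 \le s \le N$) is multiplicity free. Let $\{x_n\}, \{y_n\}$ be central sequences and suppose, toward a contradiction, that $\|[x_n, y_n]\| \not\to 0$; after extraction assume $\|[x_n, y_n]\| \ge \epsilon > 0$ for all $n$. Since the norm on any C*-algebra is the supremum of $\|\rho(\cdot)\|$ over irreducible representations $\rho$, choose an irreducible $\rho_n$ with $\|\rho_n([x_n, y_n])\| \ge \epsilon/2$. Subhomogeneity lets us extract further so that $\rho_n \in \mathrm{Irr}_s(\mathcal{A})$ for a fixed $s \le N$; compactness of the metrizable space $\mathrm{Rep}_s(\mathcal{A})$, together with boundedness of $\rho_n(x_n)$ and $\rho_n(y_n)$ in $\mathbb{M}_s(\mathbb{C})$, gives, after a further extraction, $\rho_n \to \rho_0 \in \mathrm{Irr}_s(\mathcal{A})^{-}$, $\rho_n(x_n) \to X$, and $\rho_n(y_n) \to Y$. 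The central sequence condition combined with $\rho_n(a) \to \rho_0(a)$ forces $X, Y \in \rho_0(\mathcal{A})'$: indeed, $\|\rho_n(x_n a - a x_n)\| \le \|x_n a - a x_n\| \to 0$, and passing to the limit gives $X \rho_0(a) = \rho_0(a) X$, similarly for $Y$. By hypothesis $\rho_0$ is multiplicity free, so $\rho_0(\mathcal{A})'$ is abelian and $[X, Y] = 0$. This contradicts $\|[\rho_n(x_n), \rho_n(y_n)]\| \ge \epsilon/2$ together with $[\rho_n(x_n), \rho_n(y_n)] \to [X,Y] = 0$.

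The main obstacle in (1) is the subsequence extraction needed to meet the hypotheses of Lemma~\ref{useful}; once arranged, the lemma does essentially all the work. The main obstacle in (2) is twofold: first, reducing to a fixed finite dimension $s$ — exactly where subhomogeneity is indispensable, since otherwise there is no compact space $\mathrm{Rep}_s(\mathcal{A})$ in which to form $\rho_0$; and second, recognizing that the central-sequence property propagates through the pointwise-limit representation to place the matrix-limits $X, Y$ into $\rho_0(\mathcal{A})'$. I expect (1) to be routine once Lemma~\ref{useful} is invoked; the conceptual heart of the theorem is the commutant argument in (2), which converts the hypothesis on pointwise limits into an abelianness statement that annihilates any would-be obstruction to hypercentrality.
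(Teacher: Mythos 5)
Your proof is correct. Part (1) follows the paper's argument in all essentials: the paper phrases it as a proof by contradiction rather than by contrapositive, but the content --- the subsequence extraction using compactness of $\mathcal{U}_{s}$ to rule out infinitely many $\pi_{n}$ in one equivalence class, followed by Lemma \ref{useful}(2) to manufacture two central sequences whose commutators converge in norm to $AB-BA\neq 0$ --- is identical. In part (2) you depart from the paper in one step. To place $X$ and $Y$ in $\rho_{0}\left(\mathcal{A}\right)^{\prime}$ the paper again invokes Lemma \ref{useful}(2), which forces it first to dispose of the case where $\rho_{0}$ is unitarily equivalent to infinitely many of the $\rho_{n}$ (there the paper observes that $\left\{\rho_{0}\left(x_{n_{k}}\right)\right\}$ would be a central sequence in $\mathbb{M}_{s}\left(\mathbb{C}\right)$ with non-vanishing commutators, which is impossible), whereas you prove the commutant membership directly from $\left\Vert \rho_{n}\left(x_{n}a-ax_{n}\right)\right\Vert \leq\left\Vert x_{n}a-ax_{n}\right\Vert \rightarrow 0$ together with $\rho_{n}\left(a\right)\rightarrow\rho_{0}\left(a\right)$ and $\rho_{n}\left(x_{n}\right)\rightarrow X$. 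Your route is cleaner: it requires no case analysis on unitary equivalence classes and no assumption that the $\rho_{n}$ are pairwise inequivalent --- a hypothesis of Lemma \ref{useful} that the paper does not explicitly arrange at that point of part (2). What the paper's route buys is uniformity, since the one technical lemma is reused for Theorem \ref{ct} as well; what yours buys is a shorter, self-contained argument for the only implication in which subhomogeneity actually enters.
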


\begin{proof}
$\left(  1\right)  .$ Suppose every central sequence of $\mathcal{A}$ is
hypercentral. Assume, via contradiction, that there is an $s$ in $\mathbb{N}$
and a sequence $\left \{  \pi_{k}\right \}  $ in \textrm{Irr}$_{s}\left(
\mathcal{A}\right)  $ such that, for some $\pi \in$ \textrm{Rep}$_{s}\left(
\mathcal{A}\right)  $, $\pi_{k}\left(  a\right)  \rightarrow \pi \left(
a\right)  $ for every $a\in A$, and $\pi$ is not multiplicity free. Thus
$\pi \left(  \mathcal{A}\right)  ^{\prime}$ contains projections $P$ and $Q$
such that $PQ-QP\neq0.$ Let
\[
\pi_{\infty}=\pi \oplus \pi_{1}\oplus \pi_{2}\oplus \cdots \text{ .}%
\]
If infinitely many of the $\pi_{n}$'s are unitarily equivalent, we can replace
the sequence with a new sequence such that, for every $n\in \mathbb{N}$, there
is a unitary matrix $U_{n}$ such that $\pi_{n}\left(  a\right)  =U_{n}\pi
_{1}\left(  a\right)  U_{n}^{\ast}$ for every $a\in \mathcal{A}$. In this case,
if we find a subsequence $\left \{  U_{n_{k}}\right \}  $ that converges to a
unitary $U$, we have $\pi_{0}$ is unitarily equivalent to $\pi_{1}$, which
contradicts the assumption that $\pi_{0}$ is multiplicity free. Thus we can
assume that, when $0\leq m<n<\infty$, $\pi_{m}$ and $\pi_{n}$ are not
unitarily equivalent. Suppose $A,B\in \pi_{0}\left(  \mathcal{A}\right)
^{\prime}$. By Lemma \ref{useful}, there are central sequences $\left \{
a_{n}\right \}  ,\left \{  b_{n}\right \}  $ in $\mathcal{A}$ such that%
\[
\lim_{n\rightarrow \infty}\pi_{n}\left(  a_{n}\right)  =A\text{ and }%
\lim_{n\rightarrow \infty}\pi_{n}\left(  b_{n}\right)  =B.
\]
Since $\left \{  a_{n}\right \}  $ is hypercentral,
\[
\left \Vert AB-BA\right \Vert =\lim_{n}\left \Vert \pi_{n}\left(  a_{n}\right)
\pi_{n}\left(  b_{n}\right)  -\pi_{n}\left(  b_{n}\right)  \pi_{n}\left(
a_{n}\right)  \right \Vert
\]%
\[
\leq \lim_{n}\left \Vert \pi \left(  a_{n}\right)  \pi \left(  b_{n}\right)
-\pi \left(  b_{n}\right)  \pi \left(  a_{n}\right)  \right \Vert =0.
\]
Thus $\pi_{0}\left(  \mathcal{A}\right)  ^{\prime}$ is abelian, which means
$\pi_{0}$ is multiplicity free.

$\left(  2\right)  .$ Suppose $\mathcal{A}$ is subhomogeneous. Then there is
an $N$ such that every irreducible representation of $\mathcal{A}$ is
unitarily equivalent to a representation in $\cup_{s=1}^{N}$\textrm{Irr}%
$_{s}\left(  \mathcal{A}\right)  $. Now suppose that there is a central
sequence $\left \{  a_{n}\right \}  $ in $\mathcal{A}$ that is not hypercentral.
Then there is a subsequence $\left \{  a_{n_{k}}\right \}  $ and a central
sequence $\left \{  b_{n}\right \}  $ in $\mathcal{A}$ and an $\varepsilon>0$
such that, for every $k\in \mathbb{N}$,%
\[
\left \Vert a_{n_{k}}b_{n_{k}}-b_{n_{k}}a_{n_{k}}\right \Vert \geq \varepsilon.
\]
By relabeling the subsequences, we can assume, for every $n\in \mathbb{N}$,
that
\[
\left \Vert a_{n}b_{n}-b_{n}a_{n}\right \Vert \geq \varepsilon \text{ .}%
\]
For each $n\in \mathbb{N}$ there is a representation $\pi_{n}\in \cup_{s=1}^{N}%
$\textrm{Irr}$_{s}\left(  \mathcal{A}\right)  $ such that, for every
$n\in \mathbb{N}$,%
\[
\left \Vert \pi_{n}\left(  a_{n}b_{n}-b_{n}a_{n}\right)  \right \Vert
=\left \Vert a_{n}b_{n}-b_{n}a_{n}\right \Vert \text{.}%
\]
By again restricting to a subsequence we can assume that there is an $s,$
$1\leq s\leq N$, such that $\pi_{n}\in$ \textrm{Irr}$_{s}\left(
\mathcal{A}\right)  $. Since \textrm{Rep}$_{s}\left(  \mathcal{A}\right)  $ is
compact, we can also assume that there is a $\pi_{0}\in$ \textrm{Rep}%
$_{s}\left(  \mathcal{A}\right)  $ such that $\pi_{n}\left(  a\right)
\rightarrow \pi_{0}\left(  a\right)  $ for every $a\in \mathcal{A}$. If $\pi
_{0}$ is unitarily equivalent to infinitely many of $\pi_{1},\pi_{2},\ldots,$
we have for infinitely many $k\in \mathbb{N}$ that $\left \Vert \pi_{0}\left(
a_{n_{k}}b_{n_{k}}-b_{n_{k}}a_{n_{k}}\right)  \right \Vert \geq \varepsilon$ and
$\left \{  \pi_{0}\left(  a_{n_{k}}\right)  \right \}  $ and $\left \{  \pi
_{0}\left(  b_{n_{k}}\right)  \right \}  $ are central sequence for $\pi
_{0}\left(  \mathcal{A}\right)  =\mathcal{M}_{s}\left(  \mathbb{C}\right)  $,
which is impossible. Hence we can assume that, for every $n\in \mathbb{N}$,
$\pi_{0}$ is not unitarily equivalent to $\pi_{n}$. Finally, we can assume
that
\[
\pi_{n}\left(  a_{n}\right)  \rightarrow A\text{ and }\pi_{n}\left(
b_{n}\right)  \rightarrow B
\]
in $\mathbb{M}_{n}\left(  \mathbb{C}\right)  $. By Lemma \ref{useful},
$A,B\in \pi_{0}\left(  \mathcal{A}\right)  ^{\prime}$. Hence $\pi_{0}\left(
\mathcal{A}\right)  ^{\prime}$ is not abelian, and
\[
\left \Vert AB-BA\right \Vert =\lim_{n}\left \Vert \pi_{n}\left(  a_{n}\right)
\pi_{n}\left(  b_{n}\right)  -\pi_{n}\left(  b_{n}\right)  \pi_{n}\left(
a_{n}\right)  \right \Vert \geq \varepsilon \text{.}%
\]
Thus $\pi_{0}\left(  \mathcal{A}\right)  ^{\prime}$ is not abelian, so
$\pi_{0}$ is not multiplicity free.
\end{proof}

\bigskip

To compare the property that every central sequence in a subhomogeneous
C*-algebra is hypercentral with the property that every central sequence is
trivial we will show that the latter is equivalent to \textrm{Irr}$_{s}\left(
\mathcal{A}\right)  $ is closed for every $s\in \mathbb{N}$.

Suppose $s$ is a positive integer. We define an $s$\emph{-equivalence system}
to be a tuple $\left(  X,\alpha,\beta \right)  $ where

\begin{enumerate}
\item $X$ is a compact metric space,

\item $\alpha \subset X\times X$ is a closed equivalence relation,

\item $\beta:\alpha \rightarrow \mathcal{U}_{s}$ (the set of unitary operators
in $\mathbb{M}_{s}\left(  \mathbb{C}\right)  $) such that, whenever $\left(
x,y\right)  ,\left(  y,z\right)  \in X$%
\[
\beta \left(  x,y\right)  ^{\ast}\beta \left(  x,z\right)  =\beta \left(
y,z\right)  \text{.}%
\]

\end{enumerate}

If $\left(  X,\alpha,\beta \right)  $ is an $s$-equivalence system, we define
$C\left(  X,\alpha,\beta,\mathbb{M}_{s}\left(  \mathbb{C}\right)  \right)  $
to be the set of all functions $f\in C\left(  X,\mathbb{M}_{s}\left(
\mathbb{C}\right)  \right)  $ such that, whenever $x,y\in \alpha$,%
\[
\beta \left(  x,y\right)  f\left(  x\right)  \beta \left(  x,y\right)  ^{\ast
}=f\left(  y\right)  .
\]
It is clear that $C\left(  X,\alpha,\beta,\mathbb{M}_{s}\left(  \mathbb{C}%
\right)  \right)  $ is a unital C*-algebra that contains every function of the
form%
\[
f\left(  x\right)  =h\left(  x\right)  I_{s}%
\]
with $h\in C\left(  X\right)  $ such that $h\left(  x\right)  =h\left(
y\right)  $ whenever $\left(  x,y\right)  \in \alpha$. We say that the system
$\left(  X,\alpha,\beta \right)  $ is \emph{regular} if, for every $x\in X$,%
\[
\left \{  f\left(  x\right)  :f\in C\left(  X,\alpha,\beta,\mathbb{M}%
_{s}\left(  \mathbb{C}\right)  \right)  \right \}  =\mathbb{M}_{s}\left(
\mathbb{C}\right)  \text{.}%
\]

\begin{lemma}
\label{triv}If $\left(  X,\alpha,\beta \right)  $ is a regular $s$-equivalence
system, then every central sequence in $C\left(  X,\alpha,\beta,\mathbb{M}%
_{s}\left(  \mathbb{C}\right)  \right)  $ is trivial.
\end{lemma}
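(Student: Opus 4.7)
The plan is to show directly that any central sequence $\{f_n\}$ in $\mathcal{A} := C(X,\alpha,\beta,\mathbb{M}_s(\mathbb{C}))$ admits a uniform scalar-matrix approximation $\lambda_n I_s \in \mathcal{Z}(\mathcal{A})$. The first step is to identify the center: regularity says $\text{ev}_x(\mathcal{A}) = \mathbb{M}_s(\mathbb{C})$ for every $x$, so any $f \in \mathcal{Z}(\mathcal{A})$ must satisfy $f(x) \in \mathbb{C} I_s$ pointwise, and continuity of $f$ together with the intertwining $\beta(x,y) f(x) \beta(x,y)^\ast = f(y)$ forces $\mathcal{Z}(\mathcal{A}) = \{h I_s : h \in C(X),\ h \text{ constant on } \alpha\text{-classes}\}$. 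This makes $\lambda_n(x) := \frac{1}{s}\mathrm{tr}(f_n(x))$ the natural candidate, since $\lambda_n$ is continuous and, because $\beta(x,y)$ is unitary, $\lambda_n(y) = \frac{1}{s}\mathrm{tr}(\beta(x,y) f_n(x) \beta(x,y)^\ast) = \lambda_n(x)$ on $\alpha$, so $\lambda_n I_s \in \mathcal{Z}(\mathcal{A})$.

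Next I would establish the key uniform estimate $\sup_{x \in X} \mathrm{dist}(f_n(x), \mathbb{C} I_s) \to 0$. Suppose for contradiction there exist $\varepsilon > 0$ and, after passing to a subsequence, points $x_n \in X$ with $\mathrm{dist}(f_n(x_n), \mathbb{C} I_s) \geq \varepsilon$. By compactness of $X$ and of the closed ball of $\mathbb{M}_s(\mathbb{C})$ containing all $f_n(x_n)$, extract a further subsequence so that $x_n \to x_\infty \in X$ and $f_n(x_n) \to T \in \mathbb{M}_s(\mathbb{C})$, with $T \notin \mathbb{C} I_s$. Fix a basis $M_1,\ldots,M_{s^2}$ of $\mathbb{M}_s(\mathbb{C})$; by regularity at $x_\infty$, choose $a_i \in \mathcal{A}$ with $a_i(x_\infty) = M_i$. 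Continuity gives $a_i(x_n) \to M_i$, while the central-sequence hypothesis yields $\|a_i(x_n) f_n(x_n) - f_n(x_n) a_i(x_n)\| \leq \|a_i f_n - f_n a_i\|_\infty \to 0$. Taking limits produces $M_i T = T M_i$ for every $i$, so $T \in \mathbb{C} I_s$, contradicting $\mathrm{dist}(T,\mathbb{C} I_s) \geq \varepsilon$.

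Finally I would use the elementary inequality $\|M - \frac{1}{s}\mathrm{tr}(M) I_s\| \leq 2\, \mathrm{dist}(M, \mathbb{C} I_s)$ in $\mathbb{M}_s(\mathbb{C})$: for any scalar $\mu$, $|\mu - \frac{1}{s}\mathrm{tr}(M)| = \frac{1}{s}|\mathrm{tr}(\mu I_s - M)| \leq \|M - \mu I_s\|$ (since each diagonal entry is dominated by the operator norm), and taking the infimum over $\mu$ yields the claimed factor $2$. Applying this pointwise and taking the sup over $X$ gives $\|f_n - \lambda_n I_s\|_\infty \leq 2 \sup_{x} \mathrm{dist}(f_n(x), \mathbb{C} I_s) \to 0$, so $\mathrm{dist}(f_n, \mathcal{Z}(\mathcal{A})) \to 0$.

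The main obstacle is the extraction argument in the second step: one must convert the hypothesis that commutators with arbitrary $a \in \mathcal{A}$ vanish in norm into the pointwise, uniform-in-$x$ conclusion that $f_n(x)$ is asymptotically scalar. Regularity is exactly what permits this, because it lets us realize any prescribed matrix value $M_i$ at a chosen limit point $x_\infty$ by a global element $a_i \in \mathcal{A}$; compactness of $X$ and of the closed ball in $\mathbb{M}_s(\mathbb{C})$ then supplies the subsequential limits needed to pass from the asymptotic vanishing of commutators to an honest algebraic identity, and this is where the hypotheses of the lemma are used nontrivially.
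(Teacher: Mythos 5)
Your proposal is correct and follows essentially the same route as the paper: the same compactness/contradiction argument (using regularity to realize a prescribed matrix at the limit point) to prove $\sup_{x}\mathrm{dist}(f_n(x),\mathbb{C}I_s)\to 0$, and the same normalized-trace projection with the factor-$2$ inequality to produce the central approximants. The only cosmetic difference is that you test $T$ against a full basis of $\mathbb{M}_s(\mathbb{C})$ where the paper picks a single non-commuting element $B$.
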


\begin{proof}
Suppose $\left \{  a_{n}\right \}  $ is a central sequence in $C\left(
X,\alpha,\beta,\mathbb{M}_{s}\left(  \mathbb{C}\right)  \right)  $.

\textbf{Claim:}
\[
\lim_{n\rightarrow \infty}\sup_{x\in X}\text{\textrm{dist}}\left(  a_{n}\left(
x\right)  ,\mathbb{C}I_{s}\right)  =0.
\]

Assume, via contradiction, that the claim is false. By considering a
subsequence, we can assume that there is an $\varepsilon>0$, and a sequence
$\left \{  x_{n}\right \}  $ in $X$ such that,%

\[
\lim_{n\rightarrow \infty}x_{n}=x_{0}\in X\text{ and }a\left(  x_{n}\right)
\rightarrow A\in \mathbb{M}_{s}\left(  \mathbb{C}\right)  ,
\]
and for every $n\in \mathbb{N}$,%
\[
\text{\textrm{dist}}\left(  a_{n}\left(  x_{n}\right)  ,\mathbb{C}%
I_{s}\right)  \geq \varepsilon \text{.}%
\]
It follows that \textrm{dist}$\left(  A,\mathbb{C}I_{s}\right)  \geq
\varepsilon.$ Hence there is a $B\in \mathbb{M}_{s}\left(  \mathbb{C}\right)  $
such that $\left \Vert AB-BA\right \Vert =\delta>0$. We can choose $f\in
C\left(  X,\alpha,\beta,\mathbb{M}_{s}\left(  \mathbb{C}\right)  \right)  $ so
that $f\left(  x_{0}\right)  =B$. Thus
\[
0=\lim_{n\rightarrow \infty}\left \Vert fa_{n}-a_{n}f\right \Vert \geq
\lim_{n\rightarrow \infty}\left \Vert f\left(  x_{n}\right)  a_{n}\left(
x_{n}\right)  -a_{n}\left(  x_{n}\right)  f\left(  x_{n}\right)  \right \Vert
\]%
\[
=\left \Vert f\left(  x_{0}\right)  A-Af\left(  x_{0}\right)  \right \Vert
\geq \delta \text{.}%
\]
This contradiction proves the claim.

If $T\in \mathbb{M}_{s}\left(  \mathbb{C}\right)  $ and $\lambda \in \mathbb{C}$
and $\left \Vert T-\lambda I_{s}\right \Vert =$ \textrm{dist}$\left(
T,\mathbb{C}I_{s}\right)  ,$ then%
\[
\left \vert \tau_{s}\left(  T\right)  -\lambda \right \vert =\left \Vert \tau
_{s}\left(  T-\lambda I_{s}\right)  \right \Vert \leq \mathrm{dist}\left(
T,\mathbb{C}I_{s}\right)  \text{.}%
\]
Thus $\left \Vert T-\tau_{s}\left(  T\right)  \right \Vert \leq2$\textrm{dist}%
$\left(  T,\mathbb{C}I_{s}\right)  .$ For each $n\in \mathbb{N}$, define
$z_{n}=\tau_{s}\circ a_{n}$. Clearly, $z_{n}\in \mathcal{Z}\left(  C\left(
X,\alpha,\beta,\mathbb{M}_{s}\left(  \mathbb{C}\right)  \right)  \right)  ,$
and%
\[
\lim_{n\rightarrow \infty}\left \Vert a_{n}-z_{n}\right \Vert =\lim
_{n\rightarrow \infty}\sup_{x\in X}\left \Vert a_{n}\left(  x\right)  -\tau
_{s}\left(  a_{n}\left(  x\right)  \right)  I_{s}\right \Vert
\]%
\[
\leq2\lim_{n\rightarrow \infty}\sup_{x\in X}\text{\textrm{dist}}\left(
a_{n}\left(  x\right)  ,\mathbb{C}I_{s}\right)  =0\text{.}%
\]
Hence $\left \{  a_{n}\right \}  $ is trivial.
\end{proof}

\bigskip

\begin{lemma}
\label{iso}Suppose $\mathcal{A}$ is a separable unital $C^{\ast}$-algebra and
$s\in \mathbb{N}$, \textrm{Irr}$_{s}\left(  \mathcal{A}\right)  $ is closed,
and every irreducible representation of $\mathcal{A}$ is unitarily equivalent
to a representation in \textrm{Irr}$_{s}\left(  \mathcal{A}\right)  $. Then
there is a regular $s$-equivalence system $\left(  X,\alpha,\beta \right)  $
such that $\mathcal{A}$ is isomorphic to $C\left(  X,\alpha,\beta
,\mathbb{M}_{s}\left(  \mathbb{C}\right)  \right)  $.
\end{lemma}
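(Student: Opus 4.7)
The natural choice is $X=\textrm{Irr}_{s}(\mathcal{A})$, which is compact metric since it is closed in $\textrm{Rep}_{s}(\mathcal{A})$. Define the relation $\alpha\subset X\times X$ by unitary equivalence of representations. To see $\alpha$ is closed, I would use compactness of the unitary group $\mathcal{U}_{s}$: if $\pi_{n}\to\pi$, $\rho_{n}\to\rho$, and $U_{n}\in\mathcal{U}_{s}$ satisfies $U_{n}\pi_{n}(a)U_{n}^{\ast}=\rho_{n}(a)$ for every $a$, then some subsequence $U_{n_{k}}$ converges to a $U\in\mathcal{U}_{s}$, and the limit identity $U\pi(a)U^{\ast}=\rho(a)$ witnesses $(\pi,\rho)\in\alpha$.

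For $\beta$, I would fix a basepoint $\pi_{0}$ in each $\alpha$-class and, for each $\pi$ in that class, a unitary $V_{\pi}$ with $V_{\pi}\pi_{0}(a)V_{\pi}^{\ast}=\pi(a)$ for all $a$. The requirement $\beta(x,y)f(x)\beta(x,y)^{\ast}=f(y)$ on the collection $\{\pi\mapsto\pi(a):a\in\mathcal{A}\}$, together with irreducibility of $\pi_{0}$, pins $\beta(\pi,\rho)$ down to $V_{\rho}V_{\pi}^{\ast}$ up to a scalar in $\mathbb{T}$; the scalar ambiguities can be chosen coherently so that the cocycle identity in the paper's definition of an $s$-equivalence system is satisfied.

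Next, define $\Phi\colon\mathcal{A}\to C(X,\alpha,\beta,\mathbb{M}_{s}(\mathbb{C}))$ by $\Phi(a)(\pi)=\pi(a)$. Continuity of $\pi\mapsto\pi(a)$ follows from the pointwise-convergence topology on $X$, the equivariance $\beta(\pi,\rho)\Phi(a)(\pi)\beta(\pi,\rho)^{\ast}=\Phi(a)(\rho)$ is built into the construction of $\beta$, and $\Phi$ is clearly a unital $\ast$-homomorphism. Injectivity follows because by hypothesis every irreducible representation of $\mathcal{A}$ is equivalent to one in $X$, so $\Phi(a)=0$ forces $\pi(a)=0$ for every irreducible $\pi$ and hence $a=0$. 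Regularity of $(X,\alpha,\beta)$ is immediate: each $\pi\in X$ is irreducible of dimension $s$, so $\pi(\mathcal{A})=\mathbb{M}_{s}(\mathbb{C})$, whence $\{\Phi(a)(\pi):a\in\mathcal{A}\}=\mathbb{M}_{s}(\mathbb{C})$.

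The principal obstacle is surjectivity of $\Phi$, which I would handle by a noncommutative Stone--Weierstrass argument. The image $\Phi(\mathcal{A})$ is a closed $\ast$-subalgebra of the target; it contains $C(X/\alpha)\cdot I_{s}$ via the Dauns--Hofmann identification $\mathcal{Z}(\mathcal{A})\cong C(\widehat{\mathcal{A}})=C(X/\alpha)$; and at every point it evaluates onto all of $\mathbb{M}_{s}(\mathbb{C})$. Given $f\in C(X,\alpha,\beta,\mathbb{M}_{s}(\mathbb{C}))$ and $\varepsilon>0$, I would cover the compact Hausdorff quotient $X/\alpha$ by finitely many open sets on each of which $f$ is $\varepsilon$-close to some $\Phi(a_{i})$ (using fiberwise surjectivity onto $\mathbb{M}_{s}(\mathbb{C})$) and then patch the $a_{i}$'s together with a scalar partition of unity drawn from the center. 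Closedness of $\Phi(\mathcal{A})$ then yields $\Phi(\mathcal{A})=C(X,\alpha,\beta,\mathbb{M}_{s}(\mathbb{C}))$. The technical heart of this step, rigorously carrying out the patching in the presence of the nontrivial twist $\beta$, is essentially Fell's theorem that a homogeneous $C^{\ast}$-algebra with Hausdorff spectrum is the section algebra of a locally trivial $\mathbb{M}_{s}(\mathbb{C})$-bundle.
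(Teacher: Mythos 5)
Your construction of the system $(X,\alpha,\beta)$ and of the evaluation map $\Phi(a)(\pi)=\pi(a)$ is exactly the paper's: $X=\mathrm{Irr}_{s}(\mathcal{A})$, $\alpha$ unitary equivalence, and $\beta(\pi,\rho)=V_{\rho}V_{\pi}^{\ast}$ built from basepoints --- the paper's $U_{(\pi,\rho)}=U_{(\pi_{0},\rho)}U_{(\pi_{0},\pi)}^{\ast}$ is the same choice, and no further scalar calibration is actually needed for transitivity. You also verify that $\alpha$ is closed, a point the paper leaves tacit though it is required for $(X,\alpha,\beta)$ to be an $s$-equivalence system. The two proofs part company only at surjectivity. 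The paper disposes of it with Glimm's Stone--Weierstrass theorem: pure states of $C(X,\alpha,\beta,\mathbb{M}_{s}(\mathbb{C}))$ are vector states at points of $X$, the image separates them, and Glimm's theorem gives equality. You instead argue directly: fiberwise surjectivity together with equivariance produces, for each $\alpha$-class, an $a$ with $\Phi(a)=f$ on the entire class and hence $\varepsilon$-close on a saturated neighborhood, and a partition of unity on the compact Hausdorff quotient $X/\alpha$, realized inside $\mathcal{Z}(\mathcal{A})$ via Dauns--Hofmann, patches these approximants; closedness of $\Phi(\mathcal{A})$ finishes. Both routes are correct. Yours is more elementary and self-contained, at the cost of having to justify that $X/\alpha$ is homeomorphic to the (Hausdorff) spectrum of the $s$-homogeneous algebra $\mathcal{A}$ so that the partition of unity genuinely comes from $\mathcal{Z}(\mathcal{A})$; the paper trades all of that for a one-line citation of Glimm. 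One economy you could make: the closing appeal to Fell's local-triviality theorem is superfluous, since your patching argument uses only fiberwise surjectivity and central bump functions, never local triviality of the $\mathbb{M}_{s}(\mathbb{C})$-bundle.
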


\begin{proof}
Let $X=$ \textrm{Irr}$_{s}\left(  \mathcal{A}\right)  $ and let $\alpha$
denote unitary equivalence. For each $\pi_{0}\in X$ and each $\pi$ such that
$\pi$ $\alpha$ $\pi_{0}$ we can choose a unitary $\beta \left(  \pi_{0}%
,\pi \right)  =U_{\left(  \pi_{0},\pi \right)  }$ such that
\[
U_{\left(  \pi_{0},\pi \right)  }\pi_{0}\left(  \cdot \right)  U_{\left(
\pi_{0},\pi \right)  }^{\ast}=\pi \left(  \cdot \right)  .
\]
If $\rho \in X$ and $\rho$ $\alpha$ $\pi$, we define $U_{\left(  \pi
,\rho \right)  }=U_{\left(  \pi_{0},\rho \right)  }U_{\left(  \pi_{0}%
,\pi \right)  }^{\ast}$. We define
\[
\Gamma:\mathcal{A}\rightarrow C\left(  X,\alpha,\beta,\mathbb{M}_{s}\left(
\mathbb{C}\right)  \right)
\]
by%
\[
\Gamma \left(  a\right)  \left(  \pi \right)  =\pi \left(  a\right)  .
\]
It follows that $\beta$ is regular. A pure state on $C\left(  X,\alpha
,\beta,\mathbb{M}_{s}\left(  \mathbb{C}\right)  \right)  $ can be extended to
a pure state $\varphi$ on $C\left(  X,\mathbb{M}_{s}\left(  \mathbb{C}\right)
\right)  $, which has the form%
\[
\varphi \left(  f\right)  =\left \langle f\left(  x_{0}\right)
e,e\right \rangle
\]
for some $x_{0}\in X$, which restricted to $C\left(  X,\alpha,\beta
,\mathbb{M}_{s}\left(  \mathbb{C}\right)  \right)  $ is a pure state. It is
clear that the set of pure states on $C\left(  X,\alpha,\beta,\mathbb{M}%
_{s}\left(  \mathbb{C}\right)  \right)  $ is closed and that $\Gamma \left(
\mathcal{A}\right)  $ separates the pure states on $\mathcal{A}$. It follows
from Glimm's Stone-Weierstrass theorem \cite{G} that $\Gamma \left(
\mathcal{A}\right)  =C\left(  X,\alpha,\beta,\mathbb{M}_{s}\left(
\mathbb{C}\right)  \right)  $.
\end{proof}

\bigskip

\begin{theorem}
\label{ct} Suppose $\mathcal{A}$ is a separable unital subhomogeneous
C*-algebra. The following are equivalent:

\begin{enumerate}
\item $\mathcal{A}$ is a continuous trace C*-algebra.

\item Every central sequence in $\mathcal{A}$ is trivial.

\item For every $s\in \mathbb{N}$, \textrm{Irr}$_{s}\left(  \mathcal{A}\right)
$ is closed in \textrm{Rep}$_{s}\left(  \mathcal{A}\right)  $.

\item $\mathcal{A}$ is isomorphic to a finite direct sum of C*-algebras of the
form \newline$C\left(  X,\alpha,\beta,\mathbb{M}_{s}\left(  \mathbb{C}\right)
\right)  $ with $\left(  X,\alpha,\beta \right)  $ a regular $s$-equivalence system.
\end{enumerate}
\end{theorem}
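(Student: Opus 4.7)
My plan is to prove the cycle of implications $(1)\Leftrightarrow(2)\Rightarrow(3)\Rightarrow(4)\Rightarrow(2)$. The equivalence $(1)\Leftrightarrow(2)$ is the content of Lemma 1(2), quoted from Akemann--Pedersen. The implication $(4)\Rightarrow(2)$ is immediate from Lemma \ref{triv}: a central sequence in a finite direct sum $\bigoplus_{i=1}^{m}\mathcal{A}_{i}$ projects coordinate-wise onto central sequences in each $\mathcal{A}_{i}$, each of which is trivial by Lemma \ref{triv}, and the corresponding central approximants reassemble into a central approximant in $\mathcal{A}$.

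For $(2)\Rightarrow(3)$ I would argue by contrapositive. Suppose some $\textrm{Irr}_{s}(\mathcal{A})$ fails to be closed in $\textrm{Rep}_{s}(\mathcal{A})$ and choose $\{\pi_{n}\}\subset\textrm{Irr}_{s}$ with $\pi_{n}\to\pi_{0}\in\textrm{Rep}_{s}\setminus\textrm{Irr}_{s}$ pointwise. Exactly as in the proof of Theorem \ref{hc}(1), compactness of the unitary group $\mathcal{U}_{s}$ lets me assume, after passing to a subsequence, that the $\pi_{n}$ are pairwise inequivalent and that none is equivalent to $\pi_{0}$; otherwise unitaries implementing an infinite batch of equivalences would cluster to a unitary conjugating $\pi_{0}$ onto an irreducible representation. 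Reducibility of $\pi_{0}$ then gives a non-scalar element $A\in\pi_{0}(\mathcal{A})^{\prime}$ (for instance the orthogonal projection onto one irreducible summand), and Lemma \ref{useful}(2) yields a central sequence $\{a_{n}\}$ in $\mathcal{A}$ with $\pi_{n}(a_{n})\to A$. If $\{a_{n}\}$ were trivial, with $z_{n}\in\mathcal{Z}(\mathcal{A})$ and $\|a_{n}-z_{n}\|\to 0$, each $\pi_{n}(z_{n})$ would be a scalar multiple of $I_{s}$ (since $\pi_{n}$ is irreducible), so $A=\lim\pi_{n}(a_{n})=\lim\pi_{n}(z_{n})$ would also be a scalar, contradicting the choice of $A$.

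The main obstacle is $(3)\Rightarrow(4)$, where I must break $\mathcal{A}$ into homogeneous pieces using the dimension stratification of its spectrum. Write $Y_{s}\subset\hat{\mathcal{A}}$ for the set of equivalence classes of $s$-dimensional irreducible representations; subhomogeneity gives $\hat{\mathcal{A}}=\bigsqcup_{s=1}^{N}Y_{s}$. The central claim is that (3) forces each $Y_{s}$ to be closed in the Jacobson topology. To see this, I would take any convergent sequence $[\pi_{n}]\to[\pi_{0}]$ in $\hat{\mathcal{A}}$ with $\pi_{n}\in\textrm{Irr}_{s}$ and $\pi_{0}\in\textrm{Irr}_{s_{0}}$, translate convergence into weak-$\ast$ convergence of associated vector states, and use this to choose bases for the spaces $H_{\pi_{n}}$ so that suitable unitary conjugates $\pi_{n}^{\prime}\in\textrm{Rep}_{s}$ converge (after extraction) to a representation of the form $\pi_{0}\oplus\sigma$ for some $\sigma\in\textrm{Rep}_{s-s_{0}}(\mathcal{A})$. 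Hypothesis (3) forces this limit to lie in $\textrm{Irr}_{s}$, and a direct-sum decomposition is irreducible only when one summand is absent, so $s_{0}=s$ and $Y_{s}$ is closed. Since only finitely many $s$ occur, the complement of each $Y_{s}$ is a finite union of closed sets, so each $Y_{s}$ is also open. This clopen partition of $\hat{\mathcal{A}}$ corresponds to a direct-sum decomposition $\mathcal{A}=\bigoplus_{s}\mathcal{A}_{s}$ in which every irreducible representation of $\mathcal{A}_{s}$ is $s$-dimensional and $\textrm{Irr}_{s}(\mathcal{A}_{s})$ inherits closedness from $\textrm{Irr}_{s}(\mathcal{A})$; Lemma \ref{iso} then represents each $\mathcal{A}_{s}$ as $C(X_{s},\alpha_{s},\beta_{s},\mathbb{M}_{s}(\mathbb{C}))$ for a regular $s$-equivalence system, yielding (4).
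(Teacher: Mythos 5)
Your proposal is correct, and on $(1)\Leftrightarrow(2)$, $(2)\Rightarrow(3)$ and the use of Lemmas \ref{triv} and \ref{iso} it runs parallel to the paper (your closing of the cycle via $(4)\Rightarrow(2)$ rather than $(4)\Rightarrow(1)$ is an immaterial variant, and arguably cleaner since Lemma \ref{triv} literally gives triviality of central sequences; your ending of $(2)\Rightarrow(3)$, observing that $\pi_{n}(z_{n})$ is scalar for central $z_{n}$, is the same estimate the paper writes as $\mathrm{dist}(P,\mathbb{C}I_{s})=1/2\leq\lim_{n}\mathrm{dist}(a_{n},\mathcal{Z}(\mathcal{A}))$). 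The genuine divergence is in $(3)\Rightarrow(4)$. You work in the spectrum $\hat{\mathcal{A}}$ with the Jacobson topology: you show each dimension stratum $Y_{s}$ is closed by converting convergence $[\pi_{n}]\to[\pi_{0}]$ into weak-$\ast$ convergence of pure vector states (Fell's description of the topology), extracting a pointwise-convergent sequence of unitary conjugates whose limit contains $\pi_{0}$ and must be irreducible by (3); then the finitely many strata are clopen and the ideal--open-set correspondence produces the direct sum. The paper never leaves the concrete compact spaces $\mathrm{Rep}_{s}(\mathcal{A})$: it uses compactness of the closed sets $\mathrm{Irr}_{m}(\mathcal{A})$ together with Urysohn-type elements and functional calculus to build, for each $m$, a central projection $p_{m}$ with $\pi(p_{m})=1$ on $\mathrm{Irr}_{m}$ and $\pi(p_{m})=0$ on $\mathrm{Irr}_{k}$ for $k\neq m$, and $p_{1}+\cdots+p_{N}=1$. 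Your route is shorter on the page but leans on standard machinery about $\hat{\mathcal{A}}$ (second countability so sequences suffice, Fell's topology, ideals versus open sets), and the step ``choose bases so that conjugates converge to $\pi_{0}\oplus\sigma$'' deserves the explicit argument (fix the cyclic vectors by conjugation, pass to a pointwise-convergent subsequence, identify the cyclic summand via the GNS construction); the paper's route is more elementary and self-contained, producing the separating central projections by hand. Both are sound.
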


\begin{proof}
$\left(  1\right)  \Leftrightarrow \left(  2\right)  $. This is proved in
\cite{AP}.

$\left(  2\right)  \Rightarrow \left(  3\right)  $. Suppose \textrm{Irr}%
$_{s}\left(  \mathcal{A}\right)  $ is not closed. Then there is a $\pi_{0}\in$
\textrm{Rep}$_{s}\left(  \mathcal{A}\right)  $ that is not irreducible and a
sequence $\left \{  \pi_{n}\right \}  $ in \textrm{Irr}$_{s}\left(
\mathcal{A}\right)  $ such that $\pi_{n}\left(  a\right)  \rightarrow \pi
_{0}\left(  a\right)  $. Arguing as in the proof of Theorem \ref{hc}$\left(
1\right)  $, we can assume that, for $1\leq m<n<\infty$, $\pi_{n}$ and
$\pi_{n}$ are not unitarily equivalent. Let $\pi=\pi_{0}\oplus \pi_{1}%
\oplus \cdots$. Since $\pi_{0}$ is not irreducible, there is a projection $P$
with $0\neq P\neq1$ such that $P\in \pi_{0}\left(  \mathcal{A}\right)
^{\prime}$. Thus, by Lemma \ref{useful}, there is a central sequence $\left \{
a_{n}\right \}  $ in $\mathcal{A}$ such that
\[
\pi_{n}\left(  a_{n}\right)  \rightarrow P.
\]
Thus
\[
\frac{1}{2}=\text{\textrm{dist}}\left(  P,\mathbb{C}I_{s}\right)
=\lim_{n\rightarrow \infty}\text{\textrm{dist}}\left(  \pi_{n}\left(
a_{n}\right)  ,\mathbb{C}I_{r}\right)
\]%
\[
\leq \lim_{n\rightarrow \infty}\text{\textrm{dist}}\left(  a_{n},\mathcal{Z}%
\left(  \mathcal{A}\right)  \right)  =0.
\]
This contradiction proves $\left(  2\right)  \Rightarrow \left(  3\right)  $.

$\left(  3\right)  \Rightarrow \left(  4\right)  $. Suppose \textrm{Irr}%
$_{s}\left(  \mathcal{A}\right)  $ is closed in \textrm{Rep}$_{s}\left(
\mathcal{A}\right)  $ for every $s\in \mathbb{N}$. Since $\mathcal{A}$ is
subhomogeneous, there is a minimal positive integer $N$ such that every
irreducible representation of $\mathcal{A}$ is unitarily equivalent to a
representation in $\cup_{s=1}^{N}$\textrm{Irr}$_{s}\left(  \mathcal{A}\right)
$. Suppose $1\leq m<n\leq N$. For each $\pi \in$\textrm{Irr}$_{m}\left(
\mathcal{A}\right)  $ and each $\rho \in$\textrm{Irr}$_{n}\left(
\mathcal{A}\right)  $ there is an $a\in \mathcal{A}$ with $0\leq a\leq1$ such
that $\pi \left(  a\right)  =0$ and $\rho \left(  a\right)  =1$. Thus there is
an open subset $U_{\pi,\rho}$ containing $\pi$ such that, for every $\sigma \in
U$, $0\leq \sigma \left(  a\right)  \leq1/2$. Choose a continuous $h:\left[
0,1\right]  \rightarrow \left[  0,1\right]  $ so that $h\left(  1\right)  $ and
$h|_{\left[  0,1/2\right]  }=0.$ Then $b_{U,\pi,\rho}=h\left(  a\right)  $
satisfies $\sigma \left(  b_{U,\pi,\rho}\right)  =0$ for every $\sigma \in
U_{\pi,\rho}$ and $\rho \left(  b_{U,\rho}\right)  =1$. Since $\left \{
U_{\pi,\rho}:\pi \in \text{\textrm{Irr}}_{m}\left(  \mathcal{A}\right)
\right \}  $ there is a finite subcover $\left \{  U_{\pi_{k},\rho}:1\leq k\leq
t\right \}  $ so if $b_{m,\rho}=\left(
%TCIMACRO{\dprod _{1\leq i\leq t}}%
%BeginExpansion
{\displaystyle \prod_{1\leq i\leq t}}
%EndExpansion
b_{U_{\pi_{i},\rho}}\right)  ^{\ast}\left(
%TCIMACRO{\dprod _{1\leq i\leq t}}%
%BeginExpansion
{\displaystyle \prod_{1\leq i\leq t}}
%EndExpansion
b_{U_{\pi_{i},\rho}}\right)  $, then $0\leq b_{m,\rho}\leq1$ and $\pi \left(
b_{m,\rho}\right)  =0$ for every $\pi \in$ \textrm{Irr}$_{m}\left(
\mathcal{A}\right)  $ and $\rho \left(  b_{m,\rho}\right)  =1.$ We now let
$c_{m,r}=1-b_{m,\rho}$. Then $\pi \left(  c_{m,r}\right)  =1$ for every $\pi
\in$ \textrm{Irr}$_{m}\left(  \mathcal{A}\right)  $ and $0\leq \sigma \left(
c_{m,\rho}\right)  \leq1/2.$ Proceeding as before, there is a $d_{m,n}%
\in \mathcal{A}$ such that $0\leq d_{m,n}\leq1$ and $\pi \left(  d_{m,n}\right)
=1$ for $\pi \in$ \textrm{Irr}$_{m}\left(  \mathcal{A}\right)  $ and
$\sigma \left(  d_{m,n}\right)  =1$ for $\sigma \in$\textrm{Irr}$_{n}\left(
\mathcal{A}\right)  .$ It follows that, for $1\leq m\leq N$ there is a
$p_{m}\in \mathcal{A}$ with $0\leq p_{m}\leq1$ such that $\pi \left(
p_{m}\right)  =1$ when $\pi \in$\textrm{Irr}$_{m}\left(  \mathcal{A}\right)  $
and $\pi \left(  p_{n}\right)  =0$ when $\pi \in$ \textrm{Irr}$_{k}\left(
\mathcal{A}\right)  $ with $k\neq m.$ Thus $p_{n}$ is a central projection and
$p_{1}+\cdots+p_{N}=1$. Thus $\mathcal{A}$ is a direct sum
\[
\mathcal{A}=\mathcal{A}_{1}\oplus \cdots \oplus \mathcal{A}_{N}%
\]
where, for $1\leq m\leq N$, every irreducible representation of $\mathcal{A}%
_{m}$ is unitarily equivalent to a representation in \textrm{Irr}$_{m}\left(
\mathcal{A}_{m}\right)  $ and \textrm{Irr}$_{m}\left(  \mathcal{A}_{m}\right)
$ is closed. Thus, by Lemma \ref{iso}, $\mathcal{A}_{m}$ is isomorphic to
$C\left(  X_{m},\alpha_{m},\beta_{m},\mathbb{M}_{m}\left(  \mathbb{C}\right)
\right)  $ for some regular $m$-equivalence system $\left(  X_{m},\alpha
_{m},\beta_{m}\right)  $.

$\left(  4\right)  \Rightarrow \left(  1\right)  $. This follows from Lemma
\ref{triv}.
\end{proof}

\end{document}